\newtheorem{thm}{Theorem}
\theoremstyle{definition}
\newtheorem{cor}[thm]{Corollary}
\newtheorem{lem}[thm]{Lemma}
\newtheorem{fact}[thm]{Fact}
\newtheorem*{thmA}{Theorem A}
\newtheorem*{thmB}{Theorem B}
\newtheorem*{thmC}{Theorem C}
\newtheorem*{thmD}{Theorem D}
\newtheorem*{thmE}{Theorem E}
\newtheorem*{thmF}{Theorem F}
\numberwithin{equation}{section}
\newcommand{\N}{\mathbb{N}}
\newcommand{\Z}{\mathbb{Z}}
\newcommand{\Q}{\mathbb{Q}}
\newcommand{\R}{\mathbb{R}}
\newcommand{\Cal}{\mathcal}
\def \<{\langle}
\def \>{\rangle}
\def \((  {(\!(}
\def \)) {)\!)}
\begin{document}

\title[]{Expansions of subfields of the real field by a discrete set}%
\author{Philipp Hieronymi}
\subjclass{}%
\keywords{}%
\address{University of Illinois at Urbana-Champaign\\
Department of Mathematics\\
1409 W. Green Street\\
Urbana, IL 61801\\
USA}
\email{P@hieronymi.de}
\subjclass[2000]{Primary 03C64; Secondary 14P10, 54E52}


\maketitle

\begin{abstract} Let $K$ be a subfield of the real field, $D\subseteq K$ be a discrete set and $f:D^n \to K$ be such that $f(D^n)$ is somewhere dense. Then $(K,f)$ defines $\mathbb{Z}$. We present several applications of this result. We show that $K$ expanded by predicates for different cyclic multiplicative subgroups defines $\Z$. Moreover, we prove that every definably complete expansion of a subfield of the real field satisfies an analogue of the Baire Category Theorem.
\end{abstract}

\section{Introduction}
Let $K$ be a subfield of the field of real numbers.

\begin{thmA} Let $D\subseteq K$ be discrete, $n \in \N$ and let $f:D^n\to K$ be such that $f(D)$ is somewhere dense. Then $(K,f)$ defines $\Z$.
\end{thmA}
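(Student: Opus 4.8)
The plan is to pit the two competing topological hypotheses against each other---discreteness of the domain $D^n$ and density of the image---and extract from them a single definable object from which $\Z$ can be read off, namely a nonzero discrete additive subgroup of $(K,+)$. First I would record some reductions. Write $S:=f(D^n)\subseteq K$; by hypothesis $\overline S$ contains a nonempty open interval, and after replacing $f$ by the definable function $\bar x\mapsto c\,(f(\bar x)-d)$ for suitable $c\in K$ and $d\in S$, I may assume $\overline S\supseteq(-1,1)$, so that $S$ is dense near $0$. The arity is inessential: throughout, $D^n$ is a $\emptyset$-definable discrete subset of $K^n$ and $f$ a definable map, so I would not try to reduce to $n=1$. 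A genuine preliminary point, however, is that the arguments below refer to the real order and the real topology on $K$, and these are a priori not part of $(K,f)$. I would therefore first argue that $<$ is definable in $(K,f)$---this is the first use of the somewhere-dense hypothesis, since a definable subset of $K$ dense in an interval should let one recover the ordering from the field operations---or, failing a clean definition of $<$, carry the dense set $S$ along as a definable surrogate for the Euclidean topology in every subsequent step.

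The heart of the argument is the following reduction: it suffices to define in $(K,f)$ a nonzero additive subgroup $G\le(K,+)$ that is discrete, equivalently not dense, equivalently possessing a least positive element. Indeed, every such subgroup of $(\R,+)$ is infinite cyclic, $G=\alpha\Z$ with $\alpha$ its least positive element; since $\alpha\in G\subseteq K$, the set $\Z=\{x\in K:\alpha x\in G\}$ is then definable with $\alpha$ as a parameter. To build $G$ I would use density and discreteness against one another: density of $S$ near $0$ supplies definable elements of $K$ of arbitrarily small positive size, which one feeds into differences $f(x)-f(y)$ with $x,y\in D^n$ to produce a definable set clustering at $0$; discreteness of the \emph{domain} $D^n$ is then the mechanism that prevents the subgroup generated by a carefully selected small element from being dense, forcing it to have a least positive element. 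Closedness, hence discreteness, of the resulting subgroup would be verified by tracing any accumulation in $G$ back to accumulation in the index set $D^n$, which has none beyond its isolated points.

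The hard part will be the construction of $G$, and more generally the passage between the external topological hypotheses and internal definability. The function $f$ carries no assumed compatibility with $+$ or $\cdot$, so manufacturing additive closure is the delicate step: one must choose the generator and the defining formula so that the generated set is simultaneously definable, closed under subtraction, and non-dense, and it is exactly the tension between the density of $S$ (which threatens to make every definable approximation set dense) and the discreteness of $D^n$ (which must be leveraged to defeat that density) that has to be organized into one definition. The order-definability step flagged above is a secondary obstacle of the same flavor; if it cannot be carried out directly, I would route around it by keeping $S$ as a topological surrogate throughout.
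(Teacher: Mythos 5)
Your central ``reduction'' is circular, and the step that would carry all the weight is missing. Defining a nonzero discrete additive subgroup $G\le(K,+)$ is not a reduction of the problem: such a $G$ equals $\alpha\Z$ for its least positive element $\alpha$, so defining $G$ and defining $\Z$ are the \emph{same} problem up to a one-line formula in either direction ($\Z=\{x:\alpha x\in G\}$, and conversely $\Z$ is itself such a subgroup). The entire content of the theorem is therefore hidden in ``construct $G$,'' which your proposal leaves as a hope organized around a tension. Worse, that tension points \emph{against} your route: any definable subset of $K$ that is closed under subtraction and contains arbitrarily small positive elements is automatically dense in $\R$ (each small positive element generates an arbitrarily fine grid inside the set). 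So $G$ cannot be obtained by any definable closure process applied to small differences $f(x)-f(y)$; it would have to be exactly the cyclic group generated by a single, not-too-small element, and since $f$ has no assumed compatibility with $+$ or $\cdot$, there is no mechanism in sight for a formula whose solution set is exactly that cyclic group --- which is the original problem again. Your proposal acknowledges this (``manufacturing additive closure is the delicate step'') but offers nothing to resolve it.

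The missing idea is precisely what the paper's proof is built on: \emph{asymptotic extraction} (Lemma~\ref{asym}, generalizing Miller's lemma from \cite{proj}), which is designed to avoid producing any exact algebraic object. One defines a family $(S_b)$ such that for every $n$ and every $\varepsilon\in K\cap(0,1/2)$ some fiber $S_b$ lies within $\varepsilon$ of $\{0,1,\dots,n\}$ with exactly one point per interval; $\N$ is then defined as the set of points that are approximated at \emph{every} precision by suitably translated fibers. No fiber is a group; exactness is produced by the limit over $\varepsilon$, quantified inside the defining formula. Building such a family is where the real work lies, and it uses the machinery you declared inessential: Theorem B to replace $D$ by a discrete set closed in $\R$ (needed because the combinatorial core, Fact~\ref{factdisc} imported from \cite{discrete}, requires $(D,<)$ to be well ordered), Lemmas~\ref{distance} and~\ref{newdisc} to reduce to $n=1$ with gaps at least $1$, and then the sets $A_c$ and maps $\nu_c$ together with Lemma~\ref{mainlemma}, where the density of $f(D)$ actually enters. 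A smaller point: your worry about definability of $<$ is settled by convention (the structures here are expansions of the ordered field $K$, and the paper uses $<$ freely in all definitions), while your proposed repair --- recovering $<$ from the dense image $S$ --- is itself unsubstantiated; no argument is given that a dense definable set allows one to define the ordering from the field operations.
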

A set is somewhere dense if its topological closure has interior. This result generalizes earlier work of the author in \cite{discrete} where Theorem A is shown in the case that $K=\R$ and $D$ is closed and discrete. The proof in \cite{discrete} relies crucially on the topological completeness of $\R$ and hence does not work for subfields of the real field. One can even construct a subfield $K$ and a function $f:D\to K$ that satisfy the assumptions of Theorem A, but the parameter-free formula that defines $\Z$ in $(\R,f)$ does not define $\Z$ in $(K,f)$. The work in the current paper shows how results from \cite{discrete} can still be used to establish Theorem A.\\

A subset of a subfield $K$ of $\R$ is discrete in the induced topology on $K$ if and only if it is discrete in the order topology on $\R$. However, there are discrete subsets $D$ of $K$ that are closed in the induced topology on $K$, but that are not closed in $\R$. Such discrete sets may even fail to be well ordered by the ordering on $\R$. To make use of the results of \cite{discrete} we establish the following theorem.

\begin{thmB}\label{thmdisc} Let $D\subseteq K$ be a discrete set. Then there is a discrete set $E\subseteq K$ such that $E$ is closed in $\R$, $(K,D)$ and $(K,E)$ are interdefinable and there is a surjection $g: E\to D$ definable in $(K,D)$.
\end{thmB}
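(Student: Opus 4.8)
The plan is to realize $E$ as the image of $D$ under a definable injection $h \colon D \to K$ that sends every point of $D$ lying close to an accumulation point out toward $\pm\infty$. Since a subset of $\R$ is closed and discrete exactly when it is locally finite (each bounded interval meets it in a finite set), it suffices to arrange that $h$ has locally finite image: this simultaneously yields discreteness and closedness in $\R$. If in addition $h$ is a definable bijection onto $E$ with definable inverse, then $(K,D)$ and $(K,E)$ are interdefinable and $g := h^{-1}$ is the required definable surjection. Thus the whole statement reduces to building one good map $h$.

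The raw material for $h$ is the isolation radius $\delta(d) := \inf\{|d-d'| : d' \in D,\ d' \neq d\}$, which is positive for every $d$ by discreteness and which tends to $0$ precisely as $d$ approaches a point of $\overline{D}\setminus D$. Consequently $1/\delta(d)$ blows up exactly near the limit points of $D$, so a map built from $d$ together with $1/\delta(d)$ is forced to escape to infinity there and should have locally finite image. Two points need care in turning this intuition into a definable $h$ landing in $K$. First, $\delta(d)$ itself may be irrational and hence lie outside $K$, because the infimum need not be attained; I would therefore work not with $\delta(d)$ but with an honest $K$-distance, definably selecting for each $d$ a witness $d' \in D$ with $|d-d'|$ comparable to $\delta(d)$ and setting $w(d) := |d-d'| \in K_{>0}$, which still satisfies $w(d) \to 0$ at the limit points while keeping all data inside $K$. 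Second, $E$ must be one-dimensional, so I cannot simply pass to the pair $(d, 1/w(d))$; instead I would combine $d$ and the blow-up factor $1/w(d)$ into a single field element in a way that remains injective and invertible, e.g.\ a coordinate of the form $d + 1/w(d)$ refined so that the original $d$ can be read back off from the value.

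The main obstacle is the verification that $h$ has locally finite image when the limit points of $D$ are themselves allowed to accumulate (higher Cantor--Bendixson complexity), since then there is no single scale governing the blow-up and one must show that the reciprocal factor dominates uniformly across all ``levels'' of accumulation at once; this is exactly where $D$ being merely discrete, rather than well ordered or closed in $\R$, bites. I would handle it by showing directly that any bounded infinite subset of the image would force a sequence of distinct $d_n \in D$ with $w(d_n)$ bounded away from $0$ yet $d_n$ converging in $\R$, contradicting the definition of $w$. The remaining points are then routine: injectivity together with the definable recovery of $d$ from $h(d)$ gives interdefinability and the surjection $g$, and discreteness of $E$ is subsumed by local finiteness.
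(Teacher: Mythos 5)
Your overall plan---produce a definable injection $h:D\to K$ whose image is locally finite, then read $D$ back off from the image---is in the right spirit, but the step you describe as ``definably selecting for each $d$ a witness $d'\in D$ with $|d-d'|$ comparable to $\delta(d)$'' is not a routine refinement: it is the central difficulty of the theorem, and as stated it fails. In the structure $(K,D)$, a pure subfield of $\R$ with a predicate, there is no definable choice available for this selection: the function $d\mapsto\delta(d)$ is not definable (its value need not lie in $K$, and $K$ is not definably complete, so one cannot pass to infima); a nearest point to $d$ need not exist, since the infimum need not be attained; $D$ need not be well ordered (the paper stresses exactly this), so ``the successor of $d$'' is unavailable; and the set of near-optimal witnesses $\{d'\in D : |d-d'|\le 2\,\delta(d)\}$ can be infinite, because points of $D$ may accumulate just beyond the isolation radius of $d$, so there is no canonical witness to pick. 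Nor can you dodge the issue by replacing the function $w$ with the definable relation $\{(d,d')\in D^2 : d\ne d'\ \wedge\ \forall d''\in D\setminus\{d\}\ |d-d'|\le 2|d-d''|\}$ and taking all values $d+|d-d'|^{-1}$: for a fixed $d$ these values can themselves accumulate (take distances $|d-d'|$ decreasing to an unattained infimum), so the image is no longer locally finite. The paper's proof exists precisely to manufacture a definable substitute for $w$: Lemma \ref{closedef} first shows that $(K,D)$ defines \emph{some} infinite discrete $A\subseteq K_{>0}$ closed in $\R$ (via the sets $B_\varepsilon=\{d\in D : (d-\varepsilon,d+\varepsilon)\cap D=\{d\}\}$, and, when all of these are finite, a map on $(0,\varepsilon)$ whose image escapes to infinity); after Lemma \ref{distance} the elements of $A$ serve as uniform, definable scales: $B_a:=\{d\in D : d<a,\ (d-a^{-1},d+a^{-1})\cap D=\{d\}\}$ is finite, $D=\bigcup_{a\in A}B_a$, and the finite packets $C_a=\{a+d/a : d\in B_a\}\subseteq(a,\sigma(a))$ assemble into a set that is closed in $\R$. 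Your proposal contains no substitute for this auxiliary scale set, and without it the blow-up factor $1/w(d)$ cannot even be defined.

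There is a second, smaller gap. Even granting your bijection $h$, interdefinability of $(K,D)$ and $(K,E)$ does not follow from ``$h$ is a definable bijection with definable inverse'': both $h$ and $h^{-1}$ are definable in $(K,D)$, whereas interdefinability requires recovering $D$ by a formula in which only $E$ occurs. The paper arranges this by building the scales into $E$ itself, setting $E=F\cup\{-a : a\in A\}$, so that $A$, and hence the decoding map $x\mapsto h(x)\left(x-h(x)\right)$, are definable from $E$ alone; the decoded set is then exactly $D$. Note also that the paper claims only a surjection $E\to D$, and its construction is genuinely many-to-one (each $d$ is encoded in $C_a$ for every sufficiently large $a\in A$); your insistence on a bijection is an additional constraint your argument would have to meet, and meeting it again uses the well-orderedness of the definable closed discrete set $A$, not any intrinsic structure of $D$.
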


The proof of Theorem A will be presented in the section 4. In section 2 we prove a generalization of Miller's Lemma on Asymptotic Extraction of Groups from \cite{proj} that plays a key role in the proof of Theorem A. Section 3 gives a proof of Theorem B. In the rest of this section, several applications of Theorem A and B will be discussed.

\subsection*{Two discrete subgroups} For any $\alpha \in K^{\times}$, let
\begin{equation*}
\alpha^{\mathbb{Z}}:= \{\alpha^k : k \in \mathbb{Z}\}.
\end{equation*}
 In \cite{lou} van den Dries established that the
structure $(K,\alpha^{\mathbb{Z}})$ is model theoretically tame, when $K$ is a real closed field subfield of the real field. In particular, he showed that $\Z$ is not definable in that structure. Theorem A allows us to show that this is not the case in the structure $(K,\alpha^{\Z},\beta^{\Z})$.

\begin{thmC} Let $\alpha,\beta \in K_{>0}$ with $\log_{\alpha}(\beta)\notin \Q$. Then $(K,\alpha^{\Z},\beta^{\Z})$ defines $\Z$.
\end{thmC}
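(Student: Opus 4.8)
The plan is to reduce Theorem C to Theorem A by exhibiting a discrete set and a function, both definable in $(K,\alpha^{\Z},\beta^{\Z})$, whose image is somewhere dense. The driving observation is that the hypothesis $\log_{\alpha}(\beta)\notin\Q$ forces $\ln\alpha$ and $\ln\beta$ to be linearly independent over $\Q$: if $\ln\beta/\ln\alpha$ were rational we would have $\log_{\alpha}\beta\in\Q$. Consequently the additive subgroup $\Z\ln\alpha+\Z\ln\beta$ of $(\R,+)$ is not cyclic, hence dense, since every nontrivial subgroup of $\R$ is either cyclic or dense. Applying the homeomorphism $\exp\colon\R\to\R_{>0}$ then shows that $\alpha^{\Z}\cdot\beta^{\Z}=\{\alpha^{m}\beta^{n}:m,n\in\Z\}$ is dense in $\R_{>0}$.

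Next I would set $D:=\alpha^{\Z}\cup\beta^{\Z}$. After replacing $\alpha,\beta$ by their inverses if necessary (which changes neither $\alpha^{\Z}$, $\beta^{\Z}$, nor the irrationality of $\log_{\alpha}\beta$), I may assume $\alpha,\beta>1$, so that the only accumulation points of $\alpha^{\Z}$ and of $\beta^{\Z}$ in $\R\cup\{+\infty\}$ are $0$ and $+\infty$. Hence every element of $D$ is isolated and $D$ is discrete; moreover $D$ is clearly definable in $(K,\alpha^{\Z},\beta^{\Z})$. I then define $f\colon D^{2}\to K$ by $f(x,y)=xy$; its graph is definable in the field language together with the predicate for $D$, so $f$ is definable in $(K,\alpha^{\Z},\beta^{\Z})$. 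Since $f(D^{2})\supseteq\alpha^{\Z}\cdot\beta^{\Z}$, the image $f(D^{2})$ is dense in $K_{>0}$, and therefore somewhere dense in $K$: for all $a,b\in K$ with $0<a<b$, the closure of $f(D^{2})$ in $K$ contains the nonempty open set $(a,b)\cap K$.

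With this in hand I would invoke Theorem A: since $D$ is discrete and $f(D^{2})$ is somewhere dense, $(K,f)$ defines $\Z$. As $f$ is definable in $(K,\alpha^{\Z},\beta^{\Z})$, every set definable in $(K,f)$ is definable in $(K,\alpha^{\Z},\beta^{\Z})$, and in particular $\Z$ is. This completes the reduction.

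I expect essentially all the real difficulty to sit inside Theorem A; the work specific to Theorem C is merely to manufacture the correct definable data. The only points requiring genuine care are the density of $\alpha^{\Z}\beta^{\Z}$ in $\R_{>0}$ (classical, but it is precisely where the hypothesis $\log_{\alpha}\beta\notin\Q$ enters) and the verification that a set dense in $\R_{>0}$ remains somewhere dense with respect to the order topology of the subfield $K$. The latter is the main conceptual subtlety, since throughout one must remember that closures and interiors are to be computed in $K$ rather than in $\R$.
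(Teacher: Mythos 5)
Your proposal is correct and is essentially the paper's own proof: the paper likewise takes the discrete set $\alpha^{\Z}\cup\beta^{\Z}$, applies the multiplication map $(a,b)\mapsto ab$, notes that the image $\alpha^{\Z}\beta^{\Z}$ is dense in $K_{>0}$, and invokes Theorem A. You simply make explicit the details the paper leaves implicit, namely the classical ``subgroups of $(\R,+)$ are cyclic or dense'' argument where the hypothesis $\log_{\alpha}\beta\notin\Q$ enters, the discreteness of the union, and the passage from density in $\R_{>0}$ to somewhere-density in $K$.
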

\begin{proof} The set $\alpha^{\Z}\cup \beta^{\Z}$ is discrete and definable in $(K,\alpha^{\Z},\beta^{\Z})$. Let $g: K_{>0}\times K_{>0} \to K$ be the function mapping $(a,b)$ to $ab$. The image of $(\alpha^{\Z}\cup \beta^{\Z})\times (\alpha^{\Z}\cup \beta^{\Z})$ under $g$ is $\alpha^{\Z}\beta^{\Z}$ and hence dense in $K_{>0}$. Hence $(K,\alpha^{\Z},\beta^{\Z})$ defines $\Z$ by Theorem A.
\end{proof}

%

\subsection*{An analogue of the Baire Category Theorem} An expansion $\mathcal{K}$ of $K$ is \linebreak \emph{definably complete} if every bounded subset of $K$ definable in $\mathcal{K}$ has a supremum in $K$. For details, see Miller \cite{ivp}. Given a subset $Y$ of $K^2$ and $a \in K$, we denote $\{b : (b,a) \in Y\}$ by $Y_a$.

\begin{thmD} Let $\mathcal{K}$ be a definably complete expansion of $K$. Then $\mathcal{K}$ is definably Baire; that is there exists \emph{no} set $Y \subseteq K_{>0}^2$ definable in $\mathcal{K}$ such that
\begin{itemize}
\item[(i)] $Y_t$ is nowhere dense for all $t\in K_{>0}$,
\item[(ii)]$Y_s \subseteq Y_t$ for all $s,t\in K_{>0}$ with $s<t$, and
\item[(iii)] $\bigcup_{t \in K_{>0}}Y_t = K$.
\end{itemize}
\end{thmD}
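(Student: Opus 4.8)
The plan is to argue by contradiction, so suppose some $Y \subseteq K_{>0}^2$ satisfies (i)--(iii). First I would package the hypotheses into a single definable function. Using definable completeness (which also provides infima of nonempty definable sets that are bounded below) I would define $h \colon K_{>0} \to K_{\ge 0}$ by $h(x) = \inf\{t \in K_{>0} : x \in Y_t\}$; this is total by (iii) and definable in $\mathcal{K}$. Monotonicity (ii) gives $\{x : h(x) < t\} \subseteq Y_t$, so every sublevel set of $h$ sits inside some $Y_t$ and is therefore nowhere dense; conversely the sets $\{h \le t\}$ recover a witness, so ``$\mathcal{K}$ is not definably Baire'' is equivalent to the existence of a total definable $h$ all of whose sublevel sets are nowhere dense. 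Since $K$ is Archimedean, every $x \in K_{>0}$ satisfies $h(x) \le n$ for some $n \in \N$, so $K_{>0} = \bigcup_{n \in \N}\{h \le n\}$ is a countable union of sets that are nowhere dense in $K$, hence (as $K$ is dense in $\R$) nowhere dense in $\R$ as well.

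The heart of the proof is to manufacture from $h$ the input required by Theorem A: a discrete definable set $D \subseteq K$ and a definable $f \colon D^k \to K$ whose image is somewhere dense. I would fix a bounded open interval $I \subseteq K_{>0}$ and consider the decreasing family of dense open sets $U_t = I \setminus \overline{\{h \le t\}}$ (closures of definable sets are definable), which satisfies $\bigcap_t U_t \cap K = \emptyset$ by the previous paragraph. The idea is to use definable completeness to make canonical selections — endpoints and radii of the ``gaps'' making up the $U_t$ as $t$ grows — and to assemble these selections into a discrete set whose image under a natural endpoint map is dense in $I$; Theorem A then yields that $(K,f)$, and hence $\mathcal{K}$, defines $\Z$. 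I expect this extraction to be the main obstacle: the naive selections vary continuously with $t$ and produce a definable image of an interval rather than a genuinely discrete set, so the real work lies in exploiting the combinatorial rigidity in how the gaps close up in order to force discreteness, after which Theorem A applies directly.

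Once $\Z$ is definable in $\mathcal{K}$, so are $\N$, the floor function, and a coding of finite tuples of elements of $K$; with this coding available I would internalize the classical Baire argument as a definable recursion. Define nested intervals $I_0 \supseteq I_1 \supseteq \cdots$ with $\overline{I_{n+1}} \subseteq I_n \subseteq K_{>0}$, with lengths at most $2^{-n}$, and with $\overline{I_n}\cap \overline{\{h\le n\}} = \emptyset$, choosing the endpoints canonically by $\inf$ and $\sup$ (possible since each $\overline{\{h \le n\}}$ is closed and nowhere dense). The left endpoints $a_n \in K$ form a definable, increasing, bounded sequence whose supremum $p = \bigcap_n \overline{I_n}$, computed in $\R$, avoids every $\{h \le n\}$; hence $p \notin \bigcup_n \{h \le n\} = K_{>0}$ and, since $p \ge a_0 > 0$, we get $p \notin K$. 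Thus $\{a_n : n \in \N\}$ is a nonempty bounded definable subset of $K$ with no supremum in $K$, contradicting definable completeness. It is worth emphasizing that this last step is exactly where a proper subfield differs from $\R$: for $K = \R$ the classical Baire theorem already forbids $Y$, whereas for $K \subsetneq \R$ one cannot run the diagonalization definably until Theorem A has been used to make $\N$ and the requisite coding available.
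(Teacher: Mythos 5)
Your overall architecture coincides with the paper's: assume a witness $Y$ to the failure of the Baire property, extract from it a definable closed discrete set $D$ and a definable map on $D$ with somewhere dense image, invoke Theorem A to define $\Z$, and then use the definability of $\Z$ to run the classical nested-interval argument definably and contradict the hypotheses. The difference is that the paper outsources the first and last of these steps to the literature: the extraction step is exactly \cite[Corollary 6.6]{fornasiero2}, and the internalized Baire argument is \cite[Lemma 6.2]{fornasiero2}. You attempt to prove both from scratch, and this is where the proposal breaks down. The extraction step is a genuine theorem, not a routine manipulation, and you explicitly leave it unproved: your statement that ``the real work lies in exploiting the combinatorial rigidity in how the gaps close up in order to force discreteness'' is an accurate diagnosis of the difficulty (canonical selections of gap endpoints of $\overline{\{h \le t\}}$ vary with the continuous parameter $t$ and therefore do not form a discrete set), but no idea is offered that overcomes it. Since everything downstream --- the application of Theorem A and hence the definability of $\Z$ --- depends on this step, the proof is incomplete at its central point.

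The final step also contains a flaw, though a repairable one. You assert that once $\Z$ is definable one obtains ``a coding of finite tuples of elements of $K$.'' For a proper subfield this is not justified: digit-interleaving codings produce real numbers that need not lie in $K$ (the interleaving of the binary expansions of two real algebraic numbers is in general not real algebraic), and for an arbitrary definably complete $\mathcal{K}$ defining $\Z$ there is no evident definable injection $K^2 \to K$. Without such a coding, a recursion whose stages are pairs of endpoints in $K^2$ chosen by $\inf$ and $\sup$ is not automatically definable, so your sequence $(a_n)$ has not been shown to be definable. The standard repair is to choose at each stage an interval with \emph{rational} endpoints (possible because the sets $\overline{\{h \le n\}}$ are closed and nowhere dense and $\Q$ is dense in $K$), to code rationals by integers, and to formalize the recursion inside the definable copy of first-order arithmetic on $\Z$ via G\"odel's $\beta$-function; the remainder of your argument --- the definable bounded set of left endpoints whose supremum in $\R$ lies outside $K$, contradicting definable completeness --- then goes through, and this is in substance what the cited \cite[Lemma 6.2]{fornasiero2} provides. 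So your last paragraph is morally correct modulo this correction, while the middle step remains a genuine gap.
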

\begin{proof} Suppose $\mathcal{K}$ is not definably Baire. By \cite[Corollary 6.6]{fornasiero2}, there is a closed and discrete set $D\subseteq K$ definable in $\mathcal{K}$ and $f: D \to K$ definable in $\mathcal{K}$ such that the image of $f$ is dense in $K$. By Theorem A, $\Z$ is definable in $\mathcal{K}$. Thus $\mathcal{K}$ is definably Baire by \cite[Lemma 6.2]{fornasiero2}.
\end{proof}
Definable versions of standard facts from real analysis hold in definably complete expansions of ordered fields that satisfy the conclusion of Theorem D. For details, see the work of Fornasiero and Servi in \cite{fs}.

\subsection*{Optimality of dichotomies over $\R$} By Theorem B, the dichotomy in \cite[Theorem 1.2]{discrete} extends to discrete subsets of $\R$ as follows.

\begin{thmE} Let $\mathcal{R}$ be an o-minimal expansion of $\mathbb{R}$ and let $D \subseteq \mathbb{R}$ be discrete. Then either
\begin{itemize}
 \item  $(\mathcal{R},D)$ defines $\mathbb{Z}$ or
 \item  every  subset of $\mathbb{R}$ definable in $(\mathcal{R},D)$ has interior or is nowhere dense.
\end{itemize}
\end{thmE}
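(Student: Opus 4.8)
The plan is to derive Theorem E from the dichotomy already available over $\R$ by using Theorem B to reduce an arbitrary discrete $D \subseteq \R$ to a discrete set that is closed in $\R$. The cited result \cite[Theorem 1.2]{discrete} establishes exactly the desired dichotomy, but only under the additional hypothesis that $D$ is \emph{closed and discrete} in $\R$; the content of Theorem E is the removal of the closedness assumption. First I would apply Theorem B (with $K = \R$) to the discrete set $D$: this produces a discrete set $E \subseteq \R$ that is closed in $\R$, such that $(\R, D)$ and $(\R, E)$ are interdefinable, together with a surjection $g \colon E \to D$ definable in $(\R, D)$. Since $(\mathcal{R}, D)$ and $(\mathcal{R}, E)$ have the same definable sets, it suffices to prove the dichotomy for $(\mathcal{R}, E)$.

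Now I would invoke the o-minimal dichotomy in the form of \cite[Theorem 1.2]{discrete} applied to $(\mathcal{R}, E)$, which is legitimate because $E$ is closed and discrete in $\R$. This yields directly that either $(\mathcal{R}, E)$ defines $\Z$, or every subset of $\R$ definable in $(\mathcal{R}, E)$ has interior or is nowhere dense. Transporting this conclusion back through the interdefinability of $(\mathcal{R}, E)$ and $(\mathcal{R}, D)$ gives the two alternatives of Theorem E verbatim: a set is definable in one structure precisely when it is definable in the other, and the property ``defines $\Z$'' as well as the dichotomy property ``every definable subset has interior or is nowhere dense'' are both invariant under replacing a structure by an interdefinable one.

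The only subtlety worth flagging is a matching of hypotheses. Theorem B is stated for a subfield $K$ of $\R$, and I would apply it with $K = \R$ itself, which is of course a legitimate (if trivial) instance; one should check that the statement of Theorem B does not secretly use properness of $K$ in $\R$, but as stated it does not. A second point is that \cite[Theorem 1.2]{discrete} must be quoted in a version that allows $D$ to be closed and discrete without being, say, well-ordered or the image of a single unary function — but Theorem B is precisely designed to supply such an $E$, and the surjection $g$ certifies that $D$ remains definable from $E$. I do not anticipate a genuine obstacle here: the theorem is an immediate corollary, and the essential mathematical work has already been carried out in establishing Theorem B and in the earlier paper \cite{discrete}. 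The main thing to get right is simply the clean statement that interdefinability preserves both horns of the dichotomy.
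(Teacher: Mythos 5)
Your proposal is correct and is essentially the paper's own argument: the paper derives Theorem E in one line by combining Theorem B (applied with $K=\R$, giving a closed discrete $E$ interdefinable with $D$) with the dichotomy of \cite[Theorem 1.2]{discrete} for closed discrete sets. Your write-up merely spells out the interdefinability transfer, including the observation that the constructions in Theorem B remain definable in $(\mathcal{R},D)$ because $\mathcal{R}$ expands the real field.
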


However, by the following result neither in Theorem E nor in \cite[Theorem 1.2]{discrete} can the statement \emph{`is nowhere dense'} be replaced by \emph{`is a finite union of discrete sets'}.

\begin{thmF} There is a closed and discrete set $D\subseteq \R$ such that $(\R,D)$ does not define $\Z$, but defines a set that is \emph{not} $F_{\sigma}$.
\end{thmF}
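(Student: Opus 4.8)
The plan is to construct a closed discrete set $D \subseteq \R$ directly, engineering it so that $(\R,D)$ defines a set that fails to be $F_\sigma$, while simultaneously ensuring $(\R,D)$ does not define $\Z$. The guiding principle is that definability of $\Z$ in $(\R,D)$ would collapse all tameness, so to keep $\Z$ undefinable I want $D$ to be, in a suitable sense, ``sparse'' or algebraically independent, while still being rich enough to encode a non-$F_\sigma$ set through a definable construction. A natural candidate is to take $D = \{d_k : k \in \N\}$ for a rapidly increasing sequence of reals that are algebraically (or at least multiplicatively/additively) independent, chosen so that $D$ has no definable arithmetic structure; this should prevent the recovery of $\Z$ via Theorem A, since no definable function on powers of $D$ will produce a somewhere dense image.

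The key steps I would carry out are as follows. First, I would fix a way to read off, from the positions of the points of $D$, a definable binary relation or a definable subset of $\R$ whose complexity exceeds $F_\sigma$. Concretely, I would encode a subset $S \subseteq \N$ into the gaps or ratios $d_{k+1}/d_k$ (or the spacings $d_{k+1}-d_k$) of the sequence, in such a way that membership in $S$ becomes definable in $(\R,D)$; then by choosing $S$ to be of high Borel or projective complexity I would arrange that some definable set built from $S$ is not $F_\sigma$. The point is that $F_\sigma$ sets are exactly the countable unions of closed sets, so I want to produce a definable set — say a set like $\{x : x \text{ codes an element related to infinitely many/cofinally many points of } D\}$ — whose structure forces an uncountable intersection pattern incompatible with being $F_\sigma$. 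Second, and independently, I would verify that $(\R,D)$ does \emph{not} define $\Z$: the cleanest route is the contrapositive of Theorem A, showing that no definable $f : D^n \to \R$ has somewhere dense image, which I would guarantee by the sparsity/independence of the $d_k$, so that all definable functions on $D^n$ take values in a nowhere dense (indeed discrete or ``thin'') set.

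The main obstacle I anticipate is the tension between the two requirements: making $D$ rich enough that $(\R,D)$ defines a genuinely non-$F_\sigma$ set, yet poor enough that it defines no copy of $\Z$. Defining a non-$F_\sigma$ set is a positive definability statement requiring $D$ to carry real complexity, whereas undefinability of $\Z$ is a negative statement requiring $D$ to be tame. The delicate part is therefore the simultaneous control: I expect to need a careful combinatorial or number-theoretic construction of the sequence $(d_k)$ — likely chosen with the spacings growing fast enough to be ``non-interacting'' under field operations (so that sums and products of finitely many $d_k^{\pm 1}$ never accumulate densely, blocking Theorem A) but with a designed irregularity in the spacings that encodes a set failing the $F_\sigma$ condition. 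Verifying the non-$F_\sigma$ conclusion will require identifying the precise definable set in question and checking, via a Baire-category or cardinality argument, that it cannot be written as a countable union of closed sets; this verification, together with the cell-decomposition-style analysis needed to pin down all definable subsets of $\R$ in $(\R,D)$ and confirm $\Z$ stays undefinable, is where the real work lies.
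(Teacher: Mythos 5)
Your proposal has a genuine gap, in fact two. The most serious one is your plan for showing that $(\R,D)$ does not define $\Z$: you propose to use ``the contrapositive of Theorem A,'' arranging that no definable $f:D^n\to\R$ has somewhere dense image and concluding that $\Z$ is undefinable. But that inference is the \emph{converse} of Theorem A, not its contrapositive, and it is not available. Theorem A says that a somewhere dense image of some $f:D^n \to \R$ forces $\Z$ to be definable; it says nothing about expansions in which all such images are nowhere dense, and $\Z$ could a priori be defined by a formula that has nothing to do with images of functions on powers of $D$. Non-definability of $\Z$ is a negative statement about \emph{all} formulas in the language of $(\R,D)$, and you explicitly defer exactly this analysis (``pin down all definable subsets of $\R$ in $(\R,D)$'') to unspecified future work --- so the proof is missing precisely at its hardest point. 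The second gap is in the encoding step: any subset of $\N$, viewed inside $\R$, is closed and hence trivially $F_\sigma$, and more generally any countable set you read off from gaps or ratios of the $d_k$ is automatically $F_\sigma$. To produce a definable set that is \emph{not} $F_\sigma$ you need an uncountable definable set (some $G_\delta$-type object built by quantifying over all of $D$) together with a Baire-category argument that it is not a countable union of closed sets; neither the set nor that argument appears in your sketch, and the tension you yourself identify --- enough definable complexity for a non-$F_\sigma$ set, yet not enough to define $\Z$ --- is acknowledged but never resolved.

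For comparison, the paper's own proof is two sentences and avoids all of this work: it quotes the theorem of Friedman, Kurdyka, Miller and Speissegger \cite[2.3]{interpret}, which already produces a \emph{discrete} (but not necessarily closed) set $D\subseteq\R$ such that $(\R,D)$ does not define $\Z$ yet defines a set that is not $F_\sigma$, and then applies Theorem B to replace $D$ by an interdefinable discrete set that is closed in $\R$; interdefinability preserves both properties. The entire content of Theorem F in this paper is thus the upgrade from ``discrete'' to ``closed and discrete'' via Theorem B. What you are proposing is essentially to reprove \cite[2.3]{interpret} from scratch --- a substantial construction whose difficult parts (controlling all definable sets to block $\Z$, and exhibiting a provably non-$F_\sigma$ definable set) are exactly the parts your outline leaves open.
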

\begin{proof} By \cite[2.3]{interpret} there is a discrete set $D$ such that $(\R,D)$ does not define $\Z$, but sets that are not $F_{\sigma}$. By Theorem B, we can assume that $D$ is closed.
\end{proof}

\subsection*{Acknowledgements}
I would like to thank Harvey Friedman and Chris Miller for bringing these questions to my attention, and Lou van Dries and Antongiulio Fornasiero for helpful discussions on the topic of this paper.

\subsection*{Notation} In the rest of the paper $K$ will always be a fixed subfield of $\R$. As before, we do not distinguish between the field $K$ and its underlying set. We will use $a,b,c$ for elements of $K$. The letters $l,n,m,N$ will always denote natural numbers. When we say definable, we mean definable with parameters. Given a subset $A$ of $K^{n}\times K^m$ and $a \in K^m$, we denote the set $\{b : (b,a) \in A\}$ by $A_a$.

\section{Asymptotic extraction}

\begin{lem}\label{asym} Let $\mathcal{K}$ be an expansion of $K$ and let $S\subseteq K_{>0}\times K^l$ be definable in $\mathcal{K}$ such that for every $n\in \N$ and every $\varepsilon \in K\cap(0,1/2)$, there is $b \in K^l$ such that
\begin{itemize}
\item[(1)] $S_b \subseteq \bigcup_{m \in \N, m\leq n} (m-\varepsilon,m+\varepsilon)$, and
\item[(2)] $|S_b \cap (m-\varepsilon,m+\varepsilon)| = 1$ for $m \leq n$.
\end{itemize}
Then $\mathcal{K}$ defines $\Z$.
\end{lem}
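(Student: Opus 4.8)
The plan is to write an explicit formula in $\mathcal{K}$ that defines the set $\{0,1,2,\dots\}$, and then obtain $\Z$ from it. The central difficulty is that the defining conditions (1) and (2) refer to $\N$, so I cannot insert them verbatim into a formula without circularity. Instead I replace them by an intrinsic, $\N$-free description of the sets $S_b$ furnished by the hypothesis: a set whose least element exists and whose consecutive gaps are all within $2\varepsilon$ of $1$. Concretely, let $\Phi(b,\varepsilon)$ be the formula asserting that $S_b\neq\emptyset$, that $S_b$ has a least element $\min S_b$, and that whenever $s<s'$ are consecutive in $S_b$ (that is $s,s'\in S_b$, $s<s'$, and no element of $S_b$ lies strictly between them) one has $1-2\varepsilon<s'-s<1+2\varepsilon$. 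This is definable in $\mathcal{K}$. I then set
\[
 Z=\{\,x\in K : \forall\varepsilon\in K\cap(0,\tfrac12)\ \exists b\in K^{l}\ \big(\Phi(b,\varepsilon)\wedge \exists y\in S_b\ |y-\min S_b-x|<\varepsilon\big)\,\}.
\]
The claim is $Z=\{0,1,2,\dots\}$; since $\Z=Z\cup\{x:-x\in Z\}$ is then definable, this finishes the proof.

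For the inclusion $\{0,1,2,\dots\}\subseteq Z$ I would use the hypothesis to witness the existential quantifier (a meta-level verification, not something inserted into the formula). Fix $m$ and $\varepsilon$. Because gaps accumulate, I cannot apply (1) and (2) with tolerance $\varepsilon$; instead I choose a much finer tolerance $\varepsilon''<\varepsilon/(4m)$ and apply (1), (2) with $n=m+1$ and $\varepsilon''$, obtaining $b$ whose set $S_b$ has exactly one point within $\varepsilon''$ of each relevant integer. Then consecutive points differ by at most $2\varepsilon''<2\varepsilon$ from $1$, so $\Phi(b,\varepsilon)$ holds; and taking $y$ to be the point of $S_b$ lying $m$ steps above $\min S_b$ gives, after telescoping, $|y-\min S_b-m|\le 2m\varepsilon''<\varepsilon$. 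Hence $m\in Z$ (and $0\in Z$ is witnessed by $y=\min S_b$).

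For the reverse inclusion $Z\subseteq\{0,1,2,\dots\}$, fix $x\in Z$ and let $\varepsilon\to 0$. Writing $S_b=\{s_1<s_2<\cdots\}$ with $s_1=\min S_b$, the gap condition in $\Phi$ telescopes to $|s_i-s_1-(i-1)|<2(i-1)\varepsilon$, so a witness $y=s_i$ with $|s_i-s_1-x|<\varepsilon$ satisfies $|x-(i-1)|<(2i-1)\varepsilon$. The key point is that the index $i$ is bounded independently of $\varepsilon$: from $s_i-s_1<x+\varepsilon$ and $s_i-s_1>(i-1)(1-2\varepsilon)$ one gets $i<2x+2$ once $\varepsilon<\tfrac14$. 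Therefore $|x-(i-1)|<(4x+2)\varepsilon\to 0$, so $x$ is approximated arbitrarily well by nonnegative integers; since distinct integers lie at distance at least $1$ apart, this forces $x\in\{0,1,2,\dots\}$.

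I expect the main obstacle to be precisely this accumulation of error in the telescoping estimate: a single $\varepsilon$ controls each gap only up to $2\varepsilon$, so the deviation of $s_i$ from its target integer grows linearly in $i$, and the two directions pull against each other. The inclusion $\{0,1,2,\dots\}\subseteq Z$ survives only because I am free to feed the hypothesis a tolerance $\varepsilon''$ far smaller than the $\varepsilon$ appearing in $\Phi$, while the inclusion $Z\subseteq\{0,1,2,\dots\}$ survives only because, for a \emph{fixed} target $x$, the relevant index $i$ stays bounded as $\varepsilon$ shrinks. Finally, measuring everything by the differences $y-\min S_b$ rather than by $y$ itself is what makes $\Phi$ insensitive to whether the approximating points cluster near $0,1,2,\dots$ or near $1,2,3,\dots$, so the argument is independent of the convention adopted for $\N$.
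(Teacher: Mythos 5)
Your overall strategy mirrors the paper's (replace the hypothesis by an intrinsic, $\N$-free condition on fibers, subtract the least element, and let $Z$ be the set of points approximable by $y-\min S_b$ at every tolerance), but your condition $\Phi$ is genuinely too weak, and the inclusion $Z\subseteq\{0,1,2,\dots\}$ fails. The defect is that $\Phi(b,\varepsilon)$ constrains only \emph{consecutive} pairs of $S_b$, and a fiber need not have any. The hypothesis of the lemma only asserts the \emph{existence}, for each $n$ and $\varepsilon$, of one good fiber; all other fibers of $S$ are arbitrary definable subsets of $K_{>0}$. Consider a fiber $S_{b^*}=\{1\}\cup([2,3]\cap K)$: it is nonempty, has least element $1$, and its only consecutive pair is $(1,2)$, whose gap is exactly $1$; on the dense part $[2,3]\cap K$ the gap condition is vacuous. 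So $\Phi(b^*,\varepsilon)$ holds for \emph{every} $\varepsilon$, and then every $x\in[1,2]\cap K$ lies in your $Z$, witnessed by $b^*$ at all tolerances. Such fibers cannot be excluded: starting from any $S$ satisfying the hypothesis, pass to $S\times\{0\}$ in one extra parameter variable and adjoin $\{1\}\cup([2,3]\cap K)$ as the fiber over a parameter with last coordinate $1$; the enlarged set is definable and still satisfies the hypothesis, yet your formula no longer defines $\N$. The same weakness surfaces in your reverse-inclusion argument when you write $S_b=\{s_1<s_2<\cdots\}$: this silently assumes every $\Phi$-fiber is well ordered of order type at most $\omega$ with genuine successors, which $\Phi$ does not guarantee, and a witness $y$ may lie in a dense-in-itself part of $S_b$ that the enumeration never reaches.

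The repair is precisely where the paper is more careful: its set $B_\varepsilon$ requires $|a_1-a_2|\geq 1-\varepsilon$ for \emph{all} distinct $a_1,a_2\in S_b$ (keeping the upper bound $|a_1-a_2|\leq 1+\varepsilon$ only for adjacent pairs). The all-pairs lower bound forces $S_b$ to be uniformly discrete, so it is enumerable as $s_1<s_2<\cdots$ with well-defined successors, the least element automatically exists (being bounded below by $0$ and uniformly separated), and dense clusters — the source of the vacuity above — are impossible. Your forward inclusion (feeding the hypothesis a much finer tolerance $\varepsilon''$) and your bounded-index telescoping estimate are correct and are the same computations as in the paper; once $\Phi$ is strengthened to the all-pairs separation condition, your argument becomes essentially the paper's proof.
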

\begin{proof} For $\varepsilon \in K_{>0}$ define $B_{\varepsilon}$ as the set of all $b \in K^l$ that satisfy the following two properties:
\begin{itemize}
\item[(i)] $|a_1-a_2| \geq 1 - \varepsilon$, for all $a_1,a_2 \in S_b$ with $a_1\neq a_2$ and
\item[(ii)] $|a_1-a_2| \leq 1 + \varepsilon$ for all $a_1,a_2 \in S_b$ with $S_b\cap (a_1,a_2)=\emptyset$.
\end{itemize}
For $b\in B_{\varepsilon}$, let $\lambda(b)$ be the smallest element of $S_b$. Such an element exists, since $S_b \subseteq K_{>0}$. Set
$$S'_b:= \{ a - \lambda(b) : a \in S_b\}.$$
Finally, define
$$
W := \{ c \in K : \forall \varepsilon \in K\cap(0,1/2) \exists b \in B_{\varepsilon} (c-\varepsilon,c+\varepsilon)\cap S_b'\neq \emptyset\}.
$$
We will finish the proof by showing that $W = \N$.\\
Let $n \in \N$ and $\varepsilon \in K\cap(0,1/2)$. By our assumption on $S$, there is $b\in K^l$ such that $$S_b \subseteq \bigcup_{m\leq n} (m-\frac{\varepsilon}{2},m+\frac{\varepsilon}{2}) \textrm{ and } |S_b \cap (m-\frac{\varepsilon}{2},m+\frac{\varepsilon}{2})| = 1$$ for $m \leq n$. Hence $|a_1 - a_2| \in (1- \varepsilon, 1+\varepsilon)$ for two adjacent elements $a_1,a_2 \in S_b$. Thus $b \in B_{\varepsilon}$. Since $\lambda(b) \in (0,\frac{\varepsilon}{2})$, we have that $|S_b' \cap (n-\varepsilon,n+\varepsilon)| = 1$. Hence $n\in W$.\\
Let $c \in K$ be such that $c\in(n,n+1)$ for some $n\in \N$. Let $\varepsilon \in K_{>0}$ be such that $2(n+1)\varepsilon \leq \min \{c-n,n+1-c\}$ and let $b \in B_{\varepsilon}$. Since $b \in B_{\varepsilon}$,
$$
S_b' \cap (n,n+1) \subseteq (n-n\varepsilon,n+n\varepsilon) \cap (n+1-(n+1)\varepsilon,n+1+(n+1)\varepsilon).
$$
Because of our choice of $\varepsilon$, we have $c-\varepsilon>n+n\varepsilon$ and $c+\varepsilon<n+1-(n+1)\varepsilon$. Hence $(c-\varepsilon,c+\varepsilon)\cap S_b'= \emptyset$ and $c \notin W$.
\end{proof}

Lemma \ref{asym} is a generalization of Miller's Lemma on Asymptotic Extraction of Groups from \cite[p.1484]{proj}.

\begin{cor} An expansion $\mathcal{K}$ of $K$ defines $\Z$ iff it defines the range of a sequence $(d_i)_{i\in \N}$ of elements in $K$ such that $\lim_{i \in \N} d_{i+1} - d_i \in \R\setminus \{0\}$.
\end{cor}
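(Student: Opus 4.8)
The plan is to prove the corollary as a biconditional, treating each direction separately. The forward direction should be essentially immediate: if $\mathcal{K}$ defines $\Z$, then it defines the integers themselves, and the sequence $d_i = i$ (or any definable enumeration of $\N$) has $\lim_{i} d_{i+1}-d_i = 1 \in \R\setminus\{0\}$. So the content is all in the backward direction, where one assumes $\mathcal{K}$ defines the range $D := \{d_i : i \in \N\}$ of a sequence whose consecutive differences converge to some nonzero limit $\delta$, and must conclude that $\mathcal{K}$ defines $\Z$. The natural strategy is to reduce to an application of Lemma \ref{asym}.

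First I would rescale. Replacing the sequence $(d_i)$ by $(d_i/\delta)$ — equivalently dividing by the nonzero limit, which is harmless since $\mathcal{K}$ defines the field operations — I may assume without loss of generality that $\lim_i d_{i+1}-d_i = 1$. After possibly reversing the sequence I may also assume $\delta>0$, so that the rescaled differences tend to $+1$ and the sequence is eventually strictly increasing and tends to $+\infty$. The set $D$ is definable, and the idea is that for large indices the points $d_i$ sit at asymptotically unit spacing, which is exactly the geometry that Lemma \ref{asym} is designed to exploit.

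The main work is to package $D$ into a definable family $S \subseteq K_{>0}\times K^l$ meeting the two hypotheses of Lemma \ref{asym}: for each $n$ and each $\varepsilon \in K\cap(0,1/2)$ there must be a parameter $b$ so that $S_b$ consists of exactly one point in each window $(m-\varepsilon,m+\varepsilon)$ for $m \le n$ and nothing outside these windows. The device is to use translation and truncation as the parameters. Because the consecutive differences converge to $1$, for every $\varepsilon>0$ there is an index $N$ beyond which every gap $d_{i+1}-d_i$ lies in $(1-\varepsilon,1+\varepsilon)$; a suitable finite window of the tail $\{d_N, d_{N+1}, \dots, d_{N+n}\}$, shifted by subtracting $d_N$, then lands one point in each of the first $n+1$ unit-neighborhoods up to an accumulated error that I can control. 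Quantifying this error is the crux: the deviation of $d_{N+k}-d_N$ from $k$ is a telescoping sum $\sum_{j=0}^{k-1}\big((d_{N+j+1}-d_{N+j})-1\big)$ of at most $n$ terms, each smaller than some $\varepsilon'$ in absolute value once $N$ is large, so choosing $\varepsilon' < \varepsilon/n$ keeps the total below $\varepsilon$. All of the quantities involved — the truncation to indices $\le N+n$, the shift by $d_N$, the membership in a given window — are first-order definable in $\mathcal{K}$ once $D$ is, using that $\mathcal{K}$ expands an ordered field, so the family $S$ (with $b$ recording the threshold and the shift) is definable.

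The step I expect to be the main obstacle is the definability of the initial-segment-and-shift family, rather than the metric estimate, which is routine. The subtlety is that $\mathcal{K}$ is given only as an abstract expansion of $K$ defining the unordered range $D$, not the indexing sequence $i \mapsto d_i$; I cannot simply quantify over $N \in \N$ since $\N$ is not yet available. Instead I must cut off $D$ using an element $t \in K_{>0}$ as a real threshold — taking $S_b = (D - \lambda)\cap(-\varepsilon, t)$ or similar, where $\lambda$ is the least element of $D$ above some definable cutoff — and verify that as the threshold ranges over $K_{>0}$ the resulting truncations realize every finite initial segment with the required one-point-per-window property. Once this definable repackaging is in place and shown to satisfy hypotheses (1) and (2) of Lemma \ref{asym}, the lemma delivers that $\mathcal{K}$ defines $\Z$, completing the backward direction and hence the corollary.
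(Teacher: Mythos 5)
Your overall architecture---a trivial forward direction, then reducing the backward direction to Lemma \ref{asym} via a definable family obtained by shifting $D$ by its least element above a threshold, truncating at a second threshold, and controlling the error with a telescoping sum---is essentially the paper's proof. But your very first step, the rescaling, has a genuine gap, and it sits exactly where the statement has its content. The limit $\delta$ is only assumed to lie in $\R\setminus\{0\}$, \emph{not} in $K$: for instance, take $K=\Q$ and let $d_i$ be a rational approximation of $i\sqrt{2}$ to within $2^{-i}$, so that $\delta=\sqrt{2}\notin K$. In that case $\{d_i/\delta : i \in \N\}$ is not even a subset of $K$, so it cannot be definable in $\mathcal{K}$ (an expansion of $K$, all of whose definable sets live in powers of $K$), and $\delta$ is not an admissible parameter for any definition. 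So ``dividing by the nonzero limit'' is not harmless, and no preliminary normalization can recover it. Without rescaling, your family $S_b=(D-\lambda)\cap(-\varepsilon,t)$ has its points near the multiples $0,\delta,2\delta,\dots$ rather than near $0,1,\dots,n$, so hypotheses (1) and (2) of Lemma \ref{asym} fail outright.

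The repair is the one the paper makes: put the scaling \emph{inside} the definable family as an extra parameter instead of performing it once and for all. The paper's $S$ carries an additional coordinate $b_4$, and its fibers are (essentially) $\{(b_2-b_1)/b_4 : b_2\in D,\ b_1<b_2<b_3\}$; given $n$ and $\varepsilon$, one uses the density of $K$ in $\R$ to pick $b_4\in K$ sufficiently close to $\delta$, and picks $b_1=d_N$ and $b_3$ beyond $d_{N+n}$ with $N$ large, so that the fiber has exactly one point in each window $(m-\varepsilon,m+\varepsilon)$, $m\le n$. This is why the paper's proof explicitly invokes both ``$K$ is dense in $\R$'' and ``$\lim_{i\to\infty} d_{i+j}-d_i = jc$ for every $j$'': your telescoping estimate supplies the second ingredient, but nothing in your writeup supplies the first. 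The remainder of your proposal (sign normalization by replacing $D$ with $-D$, existence of least elements because only finitely many terms of the sequence lie below any bound, definability of the shift-and-truncate family) is sound and matches the paper.
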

\begin{proof} Let $c\in \R\setminus \{0\}$ and $(d_i)_{i\in \N}$ be a sequence of elements in $K$ such that $\lim_{i\to \infty} d_{i+1} - d_{i} = c$. Let $D = \{d_i : i\in \N\}$ and $\Cal K$ be an expansion of $K$ that defines $D$. By exchanging the set $D$ by $\{-d : d \in D\}$, we can assume that $c>0$. Let $S \subseteq K_{>0} \times K^3$ be
$$
\{(a,b_1,b_2,b_3,b_4) \in K : b_1,b_2,b_3 \in D, b_1<b_2<b_3, a=\frac{b_2-b_1}{b_4}\}.
$$
Since $K$ is dense in $\R$ and for every $j \in \N$, $\lim_{i\to \infty} d_{i+j} - d_i = jc$, the set $S$ satisfies the assumptions (1) and (2) of Lemma \ref{asym}. Hence $\Cal R$ defines $\Z$.
\end{proof}

\section{Defining discrete sets that are closed in $\R$}

We say a set $X\subseteq K$ is \emph{closed in $\R$} if it is closed in the order topology on $\R$.

\begin{lem}\label{distance} Let $D\subseteq K_{>0}$ be discrete and closed in $\R$. There are $A\subseteq K_{>0}$ and a bijection $g:D\to A$ such that $g$ is definable in $(K,D)$ and $|a-b|\geq 1$ for all distinct $a,b\in A$.
\end{lem}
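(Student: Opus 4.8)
The plan is to reduce everything to the behaviour of the consecutive gaps of $D$. Since $D$ is discrete and closed in $\R$, it has no accumulation point in $\R$, so $D\cap(0,N)$ is finite for every $N$ and $D$ is well ordered of order type at most $\omega$; write $D=\{d_0<d_1<\cdots\}$. The successor map $s$ and predecessor map $p$ on $D$, and hence the gap function $\gamma(d):=s(d)-d$, are all definable in $(K,D)$. I set $\delta:=\inf_{d\in D}\gamma(d)\in\R_{\ge 0}$ and argue by cases according to whether $\delta>0$.

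If $\delta>0$ (in particular whenever $D$ is finite, the case $|D|\le 1$ being vacuous), fix a rational $q$ with $0<q<\delta$ and put $g(d):=d/q$ and $A:=g(D)$. Then $g$ is a definable bijection $D\to A$ with $A\subseteq K_{>0}$. Any two distinct $d<d'$ in $D$ are separated by at least one gap, each of size $\ge\delta$, so $d'-d\ge\delta$; hence for distinct $a,b\in A$ we obtain $|a-b|\ge\delta/q>1$, and this case is complete.

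If $\delta=0$ the gaps of $D$ are not bounded below, and I claim that in this case $(K,D)$ already defines $\Z$. The strategy is to feed the arbitrarily fine scales supplied by the small gaps into Lemma~\ref{asym}: given $n\in\N$ and $\eps\in K\cap(0,1/2)$, I would use a sufficiently small gap of $D$ as a unit and combine it with the field operations to produce, with suitable parameters $b$, a definable family $S_b$ contained in $\bigcup_{m\le n}(m-\eps,m+\eps)$ and meeting each of these intervals exactly once; Lemma~\ref{asym} then yields $\Z$. This extraction is the main obstacle, precisely because the gaps of $D$ need not be regularly spaced: the delicate point is that, although the reciprocals of the gaps grow without bound, one must still manufacture approximately integer-spaced points from irregular data, which is exactly the kind of input that Miller's asymptotic extraction and Lemma~\ref{asym} are designed to handle.

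Finally, once $\Z$ is definable in $(K,D)$ the separation is immediate. Using $\Z$ together with the definable successor on the well ordered set $D$, one defines the counting function $c(d):=\lvert\{d'\in D: d'\le d\}\rvert$, which is a definable bijection from $D$ onto an initial segment $A$ of $\N_{\ge 1}\subseteq K_{>0}$ whose consecutive elements differ by exactly $1$; taking $g:=c$ finishes this case. In both cases we obtain the required definable bijection onto a $1$-separated subset of $K_{>0}$, so that the real content of the lemma is concentrated in the $\delta=0$ case, namely the extraction of $\Z$ from a closed discrete set whose gaps tend to $0$.
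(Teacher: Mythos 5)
Your Case 1 is correct, but the lemma's content is concentrated exactly where your proof breaks down: the claim underpinning Case 2 --- that the infimum of the gaps being $0$ forces $(K,D)$ to define $\Z$ --- is false, not merely unproven. Take $K=\R$ and
$$
D = \{2^n : n \in \N\} \cup \{2^n + 2^{-n} : n \in \N\}.
$$
This $D$ is contained in $\R_{>0}$, discrete, closed in $\R$, and its consecutive gaps include every $2^{-n}$, so their infimum is $0$. Yet $D$ is definable in $(\R,2^{\Z})$, since $D = X \cup \{x + x^{-1} : x \in X\}$ with $X = 2^{\Z}\cap[1,\infty)$, and by van den Dries' theorem \cite{lou} (quoted in the introduction of this paper) $(\R,2^{\Z})$ does not define $\Z$; hence neither does $(\R,D)$. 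This is consistent with the rest of the paper: the hypothesis that actually powers Theorem A is that some definable image of $D^n$ is somewhere \emph{dense}, which is far stronger than the gaps accumulating at $0$. Accordingly, your sketch for Case 2 could not be completed: you never exhibit the set $S$ required by Lemma \ref{asym}, you only assert that sufficiently small gaps can be combined with the field operations to produce integer-spaced points, and you yourself flag this extraction as the main obstacle. (Your closing step is also shaky: even when $\Z$ is definable, defining the counting function $d \mapsto |\{d'\in D : d'\leq d\}|$ over an arbitrary subfield $K$ is not routine, since the projective-closure arguments available over $\R$ use completeness --- the failure of such transfer is the very subject of this paper.)

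The idea you are missing is that the scaling factor need not be uniform: it may depend definably on the point being moved. Since $D$ is discrete and closed in $\R$, the set $\{e \in D : e < d\}$ is finite for every $d \in D$, so with $\sigma$ the (definable) successor function on the well-ordered set $(D,<)$ one can set
$$
g(d) := d \cdot \max\left(\{(\sigma(e)-e)^{-1} : e \in D, \ e < d\} \cup \{1\}\right),
$$
which is well defined, takes values in $K$, and is definable in $(K,D)$. Writing $M_d$ for the maximum, we have $M_{\sigma(d)} \geq M_d$ and $M_{\sigma(d)} \geq (\sigma(d)-d)^{-1}$, so, using $d>0$,
$$
g(\sigma(d)) - g(d) \ \geq\ (\sigma(d)-d)\, M_{\sigma(d)} \ \geq\ 1 .
$$
Thus $g$ is strictly increasing and its image is $1$-separated, which proves the lemma in one stroke --- no case distinction, no appeal to $\Z$, and only the finiteness of initial segments of $D$ (i.e., closedness in $\R$) is used. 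This is the paper's actual proof.
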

\begin{proof} Let $\sigma : D \to D$ be the successor function on the well-ordered set $(D,<)$. Define $g: D \to K$ by
$$
d \mapsto d \cdot \max \left( \{(\sigma(e)-e)^{-1} \ : \ e \in D, e < d \} \cup \{1\}\right).
$$
The maximum in the definition of $g$ always exists in $K$, because the set $\{e \in D : e < d\}$ is finite. The function $g$ is strictly increasing and definable in $(K,D)$. The image of $D$ under $g$ is discrete and closed as subset of $\R$. By construction, the distance between two elements of $g(D)$ is at least 1.
\end{proof}

\begin{lem}\label{closedef} Let $D\subseteq K_{>0}$ be an infinite discrete set. Then $(K,D)$ defines an infinite discrete set $A\subseteq K_{>0}$ that is closed in $\R$.
\end{lem}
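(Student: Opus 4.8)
The plan is to split on whether $D$ is already closed in $\R$. If $D$ is closed in $\R$, there is nothing to do: $A=D$ is itself an infinite discrete set closed in $\R$. So the entire content of the lemma lies in the case where $D$ is \emph{not} closed in $\R$, which is equivalent to saying that $D$ has a finite accumulation point $p\in\R$ (necessarily $p\notin D$, since $D$ is discrete). The task then becomes: definably manufacture from this accumulation an infinite set that is closed in $\R$, i.e. one that meets every bounded interval in only finitely many points.

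The main device I would use is to send the accumulation off to infinity by reciprocating gaps. Define the definable set of adjacent gaps $\Gamma=\{e-d : d,e\in D,\ d<e,\ (d,e)\cap D=\emptyset\}\subseteq K_{>0}$. Since $D$ accumulates at $p$, there are points of $D$ arbitrarily close together near $p$, and one checks this forces $\Gamma$ to contain elements arbitrarily close to $0$. Consequently, for $\varepsilon\in K_{>0}$ the set $\Gamma_\varepsilon:=\Gamma\cap(0,\varepsilon)$ is nonempty and $A:=\{1/\gamma : \gamma\in\Gamma_\varepsilon\}$ is an unbounded definable subset of $K_{>0}$. The key point is that if $\varepsilon$ is chosen so that $0$ is the \emph{only} accumulation point of $\Gamma$ in $[0,\varepsilon]$, then the elements of $\Gamma_\varepsilon$ tend to $0$, hence the elements of $A$ tend to $+\infty$; every bounded interval then contains only finitely many elements of $A$, so $A$ is locally finite, and therefore an infinite discrete set closed in $\R$, as required.

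The hard part will be justifying that such a truncation can actually be arranged, i.e. definably isolating a family of gaps that genuinely shrink to $0$. The danger is that $\Gamma$ may possess positive accumulation points (infinitely many \emph{distinct} gap lengths clustering at some $g^{*}>0$), in which case $A$ inherits the finite accumulation point $1/g^{*}$ and fails to be closed; worse, the derived set $D'$ can be complicated (even perfect), so such positive accumulation points of $\Gamma$ could occur at arbitrarily small scales and defeat every fixed $\varepsilon$. The crux is therefore to reduce, \emph{first-order and definably}, to a single accumulation point—note that both ``infinite'' and ``is an accumulation point'' are not directly expressible over $(K,D)$, so this reduction cannot be done naively. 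I would attempt this by using a parameter $t\in K$ taken very close to $p$ to carve out a subinterval of $K$ on which $D$ accumulates only toward $p$, so that the adjacent gaps of $D$ inside that interval form a definable family converging to $0$; reciprocating exactly these gaps then produces the desired infinite locally finite set. Making this extraction rigorous in the presence of a wild derived set is precisely where the discreteness of $D$—the fact that every point is isolated, so genuine gaps persist at every scale near $p$—must be exploited, and it is the step I expect to be the main obstacle.
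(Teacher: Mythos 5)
Your proposal breaks down at its first substantive claim: that accumulation of $D$ at $p$ forces the gap set $\Gamma$ to contain elements arbitrarily close to $0$. In fact $\Gamma$ can be \emph{empty}: a discrete set need not have any adjacent pairs at all. Take $D$ to be the set of midpoints of the complementary intervals of the middle-thirds Cantor set. These midpoints are rational, so $D\subseteq \Q\subseteq K$ for every subfield $K$ of $\R$; the midpoint of a gap of length $\ell$ is at distance at least $\ell/2$ from every other midpoint, so $D$ is discrete; and $D$ is infinite and not closed in $\R$ (its points accumulate at Cantor set points). But between any two such midpoints there lies a third, so $(D,<)$ is densely ordered and there is \emph{no} pair $d<e$ in $D$ with $(d,e)\cap D=\emptyset$; your $\Gamma$ is empty and the construction produces nothing. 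The flaw in the heuristic is that between two points of $D$ lying close to $p$ there may be infinitely many further points of $D$, with no adjacent pair among them. The second gap is the one you flag yourself: when the derived set of $D$ is perfect, \emph{every} neighborhood of \emph{every} accumulation point contains further accumulation points, so no choice of a parameter $t$ near $p$ carves out an interval in which $D$ accumulates only toward $p$. You correctly identify this as the main obstacle, but the proposal offers no way around it, so even where adjacent gaps do exist the argument remains incomplete.

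The paper's proof sidesteps both problems by working with isolated points at a given scale rather than with gaps between adjacent points: for $\varepsilon\in K_{>0}$ it sets $B_{\varepsilon}:=\{d\in D : (d-\varepsilon,d+\varepsilon)\cap D=\{d\}\}$. Discreteness of $D$ guarantees $D=\bigcup_{\varepsilon>0}B_{\varepsilon}$, so these sets exhaust $D$ no matter how wild the derived set is (in the Cantor midpoint example, $B_{\varepsilon}$ is exactly the set of midpoints of gaps of length $\geq 2\varepsilon$). Then there is a dichotomy. If some $B_{\varepsilon}$ is infinite, it is $\varepsilon$-separated, hence itself discrete, unbounded and closed in $\R$, and we are done. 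If every $B_{\varepsilon}$ is finite, then the definable function on $(0,\varepsilon)$ given by
$$
\delta \mapsto \max\left(\{(d_1-d_2)^{-1},\ d_1-d_2 : d_1,d_2\in B_{\delta},\ d_1>d_2\}\cup\{1\}\right)
$$
is well defined because each $B_{\delta}$ is finite, tends to $\infty$ as $\delta\to 0^{+}$ (by pigeonhole, a large finite subset of $D$ must contain either a pair very close together or a pair very far apart), and takes only finitely many values on each $(\delta,\varepsilon)$; hence its image meets every bounded interval in a finite set and is the required infinite discrete set closed in $\R$. Note that this construction never needs to locate, name, or isolate an accumulation point of $D$ --- which, as you observed, may not even lie in $K$ --- and it needs no case split on the topology of $D$ beyond the definable dichotomy on the cardinality of the sets $B_{\varepsilon}$.
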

\begin{proof} For every $\varepsilon \in K_{>0}$, we define\footnote{This definition was first used by Fornasiero in \cite[Remark 4.16]{fornasiero} for defining closed and discrete sets in definably complete expansions of fields.}
$$
B_{\varepsilon} := \{ d \in D : (d-\varepsilon,d+\varepsilon)\cap D = \{d\} \}.
$$
Note that $B_{\varepsilon}\supseteq B_{\delta}$, for $\varepsilon,\delta \in K_{>0}$ with $\varepsilon \leq \delta$.
If there is $\varepsilon \in K$ such that $B_{\varepsilon}$ is infinite, then this $B_{\varepsilon}$ is unbounded, discrete and closed in $\R$. So we can reduce to the case that $B_{\varepsilon}$ is finite for every $\varepsilon \in K$.\newline
Let $\varepsilon \in K_{>0}$ be such that $B_{\varepsilon}$ contains at least two elements. Let $g:(0,\varepsilon) \to D$ be the function that maps
$$
\delta \mapsto \max \left(\{ (d_1 - d_2)^{-1}, d_1-d_2 : d_1,d_2 \in B_{\delta}, d_1> d_2\} \cup \{1\}\right).
$$
Then $g(\left(0,\varepsilon\right))$ is infinite, since $D$ is. On the other hand, for every $\delta \in (0,\varepsilon)$, $g(\left(\delta,\varepsilon\right))$ is finite and $g\to \infty$ as $\delta \to 0^{+}$. Hence for every $N\in \N$, $\left(1,N\right)\cap g(\left(0,\varepsilon\right))$ is finite and thus $g(\left(0,\varepsilon\right))$ is closed in $\R$.
\end{proof}
\begin{proof}[Proof of Theorem B] Let $D$ be a discrete subset of $K$. By replacing $D$ by
$$
\{ -(d-1)^{-1} : d \in D_{\leq 0} \} \cup \{ 1 + d : d \in D_{>0}\},
$$
we can assume that $D \subseteq K_{>0}$. By Lemma \ref{distance} and \ref{closedef}, there is an infinite set $A\subseteq K_{>0}$ definable in $(K,D)$ such that $|a_1-a_2|\geq 1$ for all $a_1,a_2 \in A$ with $a_1\neq a_2$. Let $\sigma: A \to A$ be the successor function on the well ordered set $(A,<)$.
We now construct a discrete set $E$ that is closed in $\R$ and encodes all the information about $D$. For every $a \in A$, set
$$
B_a := \{ d \in D : d < a \textrm{ and } (d-a^{-1},d+a^{-1})\cap D = \{d\} \}.
$$
The set $B_a$ is finite and definable in $(K,D)$ for every $a\in A$. Moreover, $B_{a_1} \subseteq B_{a_2}$ for $a_1,a_2 \in A$ with $a_1\leq a_2$. Since $D$ is discrete, $D=\bigcup_{a \in A} B_a$.
Further for $a \in A$, define
$$
C_a := \{a + \frac{d}{a} : d \in B_a\}.
$$
Then $C_a$ is finite, definable in $(K,D)$ and $$C_a\subseteq (a,a+1) \subseteq (a,\sigma(a)).$$ Finally set $F := \bigcup_{a \in A} C_a$. Since $F \cap (a,\sigma(a))=C_a$ is finite for every $a\in A$, the set $F$ is discrete and closed in $\R$. Now define
$$
E := F \cup \{-a \ : \ a \in A\}.
$$
Then $E$ is discrete and closed in $\R$, since $A$ and $F$ are. Moreover, $A$ and $F$ are definable in $(K,E)$, because $A,F\subseteq K_{>0}$.\\
\noindent It is only left to show that there is a surjection $f: E \to D$ definable in $(K,E)$. Let $h : K \to A$ be a function mapping a real number $x$ to the largest $a \in A$ with $a<x$ if such an $a$ exists, and to 0 otherwise. Note that $h$ is definable in $(K,E)$, because $A$ is. Define a function $g: K \to K$ by
\begin{align*}
g(a):= h(a)(a-h(a)).
\end{align*}
The image of $C_a$ under $g$ is $B_a$ for each $a\in A$, because $C_a \subseteq (a,\sigma(a))$. Hence the image of $F$ under $g$ is $D$, since $F=\bigcup_{a \in A} C_a$. Let $d \in D$. Then let $f: E \to D$ be the function that maps $a\in E$ to $g(a)$ if $a \in F$, and to $d$ otherwise. The image of $E$ under $f$ is $D$ and $f$ is definable in $(K,E)$, since $g$ and $F$ are.
\end{proof}

\begin{lem}\label{newdisc} Let $D\subseteq K_{>0}$ be discrete and closed in $\R$. There are $E\subseteq K_{>0}$, $n\in \N$ and a bijection $g:D^n\to E$ such that $g$ is definable in $(K,D)$ and $E$ discrete  and closed in $\R$.
\end{lem}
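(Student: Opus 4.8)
The plan is to prove the statement for every $n\in\N$ by induction, reducing the whole problem to a single pairing construction. If $D$ is finite the claim is trivial (any bijection between finite subsets of $K_{>0}$ is definable with parameters), so assume $D$ is infinite. The base case $n=1$ is immediate with $E=D$ and $g$ the identity. For the inductive step I would use the identity $D^n=D^{n-1}\times D$: given a definable bijection $D^{n-1}\to E_{n-1}$ onto a discrete set $E_{n-1}\subseteq K_{>0}$ closed in $\R$, it suffices to produce a definable bijection $E_{n-1}\times D\to E_n$ onto a discrete set closed in $\R$, and then compose. Thus everything comes down to the following pairing claim: for discrete sets $X,Y\subseteq K_{>0}$ closed in $\R$ and definable in $(K,D)$, there is a definable bijection from $X\times Y$ onto a discrete set closed in $\R$. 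Since $X\times Y$ is a definable subset of $(X\cup Y)^2$ and every subset of a discrete set closed in $\R$ is again discrete and closed in $\R$, it is enough to prove the claim for $X=Y=:D'$, where $D'=X\cup Y$ is discrete and closed in $\R$.

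For the pairing map on $D'^2$ I would first normalize. By Lemma \ref{distance} applied to $D'$ together with rescaling by the factor $2$, I may assume that $D'\subseteq K_{>0}$ satisfies $|d_1-d_2|\geq 2$ for all distinct $d_1,d_2\in D'$; both reductions are definable bijections onto discrete sets closed in $\R$. Now, for $(a,b)\in D'^2$ put $v=\max(a,b)$, $u=\min(a,b)$, and let $\epsilon=0$ if $a\geq b$ and $\epsilon=\tfrac12$ if $a<b$. Define $\phi(a,b)=v+\tfrac{u}{2v}+\epsilon$. Since $0<u\leq v$ one has $\tfrac{u}{2v}\in(0,\tfrac12]$, with $\tfrac{u}{2v}=\tfrac12$ only when $a=b$ (in which case $\epsilon=0$); hence $\phi(a,b)\in(v,v+1)$, and the two values of $\epsilon$ place the point in $(v,v+\tfrac12]$ or in $(v+\tfrac12,v+1)$ respectively. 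This makes $\phi$ injective: from $z=\phi(a,b)$ the block $(v,v+1)$ containing $z$ recovers $v$, the position of $z-v$ relative to $\tfrac12$ recovers $\epsilon$, then $u=2v(z-v-\epsilon)\in D'$, and finally $\epsilon$ tells whether $(a,b)=(v,u)$ or $(a,b)=(u,v)$. Since $\max$, $\min$, the comparison defining $\epsilon$, and these field operations are all definable in $(K,D)$, both $\phi$ and its inverse are definable; set $E_n:=\phi(D'^2)$.

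The crux is to check that $E_n$ is discrete and closed in $\R$, and this is exactly where closedness in $\R$ (rather than merely in $K$) is used. Because the elements of $D'$ are spaced at distance at least $2$, the blocks $(v,v+1)$ for $v\in D'$ are pairwise disjoint and any two are separated by distance at least $1$. For each fixed $v$ the set $E_n\cap(v,v+1)$ is finite, since $u$ ranges over the finite set $\{d\in D':d\leq v\}$ and $\epsilon$ over two values; consequently distinct points of $E_n$ in a common block are separated, while points in different blocks are separated by at least $1$, so every point of $E_n$ is isolated. For closedness, any bounded interval meets only finitely many blocks, because $D'$ is discrete and closed in $\R$ and hence has only finitely many elements below any bound; thus every bounded interval contains only finitely many points of $E_n$, so $E_n$ has no accumulation point in $\R$. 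I expect this verification — in particular the passage from \emph{closed in $K$} to \emph{closed in $\R$}, which fails without the closedness hypothesis on $D$ — to be the only delicate point, the remaining bookkeeping being routine. (Here I read the lemma as asserting the conclusion for every given $n\in\N$, which is the form needed for the reduction used in Theorem~A; the literal existential reading of $n$ would be trivially satisfied by $n=1$.)
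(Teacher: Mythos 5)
Your proof is correct, but it is organized differently from the paper's. The paper proves the lemma in one shot rather than by induction on $n$: after the same normalization through Lemma \ref{distance} (so that distinct elements of $D$ are at distance at least $1$), it defines
$$
h(x_0,x_1,\dots,x_n) = x_0+\sum_{i=1}^{n}\frac{x_i}{(nx_0)^i}
$$
and takes $g(d_1,\dots,d_n) = h(\max\{d_1,\dots,d_n\},d_1,\dots,d_n)$, leaving injectivity and the closed discreteness of the image as an easy verification. The underlying encoding idea is thus the same as yours: anchor each tuple at its maximum $v$ and pack the remaining data into the bounded interval just above $v$, where closedness in $\R$ guarantees that only finitely many elements of the discrete set lie below any bound, so each block contributes finitely many points and the image has no accumulation points. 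The differences: the paper's positional expansion encodes all $n$ coordinates simultaneously and remembers their order by position, so no analogue of your order bit $\epsilon$ is needed and no induction is required; your route reduces everything to a binary pairing function, which is easier to verify in full detail but obliges you to check the auxiliary facts your induction rests on (that the union of two closed discrete sets is again closed and discrete -- true, but it genuinely uses closedness -- and that definability in $(K,D')$ for a $(K,D)$-definable $D'$ yields definability in $(K,D)$; both are routine and your sketches of them are fine). Your closing remark about the quantifier on $n$ is also correct: as used in the proof of Theorem A, the lemma is needed for a given $n$, so the universal reading you adopt (and which your induction delivers) is the intended one; the paper's single formula likewise works for every fixed $n$.
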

\begin{proof} By Lemma \ref{distance}, we can assume that the distance between two elements of $D$ is at least 1.
Let $h: K_{>0} \times K^{n} \to K$ be defined by
$$
(x_0,x_1,...,x_n) \mapsto x_0 + \sum_{i=1}^{n} \frac{x_i}{(nx_0)^i}.
$$
Consider $g: D^n \to K$ defined by
$$
(d_1,...,d_n) \mapsto h(\max\{d_1,...,d_n\},d_1,...,d_n).
$$
It is easy to show that $g$ is injective and $g(D^n)$ is discrete and closed in $\R$.
\end{proof}

\section{Proof of Theorem A} Let $D$ be a discrete subset of $K$ and let $f:D^n \to K$ be a function such that $f(D^n)$ is somewhere dense. By Theorem B we can reduce to the case that $D$ is closed in $\R$. After a simple modification, we can assume that $D\subseteq K_{>0}$. By Lemma \ref{distance} and \ref{newdisc} we can assume that $n=1$ and that the distance between two distinct elements of $D$ is at least $1$. Composing $f$ with a semialgebraic function we can even assume that $f(D)\subseteq (1,2)$.\\

\noindent We recall several definitions from \cite{discrete}. Let $\varphi(x,y)$ be the formula
$$
\forall u \in D \left[f(u)<y<f(u)(1+u^{-2})\right]\rightarrow (u < x^{\frac{1}{7}} \vee u > x).
$$
Note that for all $a,b \in K$
$$
(\R,f)\models \varphi(a,b) \textrm{ iff } (K,f) \models \varphi(a,b).
$$
For $c \in \R$, define
$$
A_c :=\{ d \in D : f(d) < c < f(d)\cdot (1+d^{-2}) \wedge \varphi(d,c)\}.
$$
Further for $c \in \R$ let $v_c: D \setminus \{f^{-1}(c)\} \to \R$ be given by
$$
v_c(x) := \frac{x^{-2}f(x)}{c-f(x)}.
$$
The following Fact is the key step in the proof of \cite[Theorem 1.1]{discrete}.
\begin{fact}{(see \cite[p. 2166]{discrete})}\label{factdisc} There is an increasing sequence $(d_n)_{n=1}^{\infty}$ of elements in $D$ with the following properties: for all $m,n\in\N_{\geq 1}$
\begin{itemize}
\item[(i)] if $m<n$, then
\begin{align*}
f(d_m)(1+\frac{d_m^{-2}}{m+\frac{1}{m}}) &<f(d_n)(1+\frac{d_n^{-2}}{n+\frac{1}{n}})\textrm{ and }\\
f(d_n)(1+\frac{d_n^{-2}}{n})&<f(d_m)(1+\frac{d_m^{-2}}{m})<2,
\end{align*}
\item[(ii)] if $d \in D$, $n\geq 2$ and $d_1\leq d_{n-1}^7<d<d_n$, then
 \begin{equation*}
 f(d)(1+d^{-2}) < f(d_n) \textrm{ or } f(d)>f(d_n)(1+d_n^{-2}).
 \end{equation*}
 \item[(iii)] $d_n > d_{n-1}^{49}$ for $n\geq 2$.
 \end{itemize}
\end{fact}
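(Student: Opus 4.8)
The plan is to read property (i) as the statement that the points $q_n := f(d_n)\bigl(1+\tfrac{d_n^{-2}}{n+1/n}\bigr)$ increase in $n$, the points $p_n := f(d_n)\bigl(1+\tfrac{d_n^{-2}}{n}\bigr)$ decrease in $n$ and stay below $2$, and $q_n<p_n$ always (the last being automatic since $n+1/n>n$). Writing $I_d := \bigl(f(d),\,f(d)(1+d^{-2})\bigr)$, the $q_n$ and $p_n$ lie inside $\overline{I_{d_n}}$, and (i) forces a nested sequence of closed intervals $[q_n,p_n]$. Since $\R$ is complete, their intersection is a single point $c\in(1,2)$ with $q_n<c<p_n$ for every $n$, which is exactly $n<v_c(d_n)<n+1/n$; in particular $f(d_n)$ increases to $c$ from below at the prescribed rate $c-f(d_n)\asymp f(d_n)d_n^{-2}/n$. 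Thus the Fact amounts to producing a sequence along which a single limit point $c$ sits just inside every $I_{d_n}$ and is separated from the intervals $I_d$ of the intermediate $d$; properties (ii) and (iii) are precisely what force $d_n\in A_c$, i.e.\ $\varphi(d_n,c)$, for this emergent $c$.

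I would construct $(d_n)$ by recursion. Given $d_1<\dots<d_{n-1}$ and the interval $(q_{n-1},p_{n-1})$, note that deleting the finitely many elements of $D$ below $d_{n-1}^7$ leaves the left endpoints $\{f(d): d>d_{n-1}^7\}$ dense in $(1,2)$, so there are arbitrarily large $d>d_{n-1}^7$ with $f(d)$ in any prescribed subinterval of $(q_{n-1},p_{n-1})$. Fixing a margin $0<\gamma<\tfrac12(p_{n-1}-q_{n-1})$, I would require $f(d_n)\in(q_{n-1}+\gamma,\,p_{n-1}-\gamma)$ and take $d_n$ so large that $2d_n^{-2}<\gamma$ and $d_n>d_{n-1}^{49}$. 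Because $q_n$ and $p_n$ differ from $f(d_n)$ by at most $f(d_n)d_n^{-2}<2d_n^{-2}<\gamma$, these constraints give $q_{n-1}<q_n$ and $p_n<p_{n-1}$; together with a base choice making $p_1<2$ this yields (i), while $d_n>d_{n-1}^{49}$ is (iii). The only freedom not yet spent is the exact placement of $f(d_n)$ inside its window, and this must be used to secure (ii).

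The main obstacle is (ii): among the \emph{finitely many} $d$ with $d_{n-1}^7<d<d_n$, none of the intervals $I_d$ may meet the short closed interval $\overline{I_{d_n}}=[f(d_n),\,f(d_n)(1+d_n^{-2})]$. The decisive structural input is that $D$ is $1$-separated, so, enumerating $D=\{a_1<a_2<\cdots\}$, we have $a_k\ge a_1+(k-1)$ and hence $\sum_{d\in D}d^{-2}<\infty$; since $|I_d|=f(d)d^{-2}<2d^{-2}$, the intermediate intervals have total length at most the tail $\sum_{d>d_{n-1}^7}2d^{-2}$, which is as small as we like once $d_{n-1}$ was taken large. A value $t$ is forbidden for $f(d_n)$ exactly when $\overline{[t,t+|I_{d_n}|]}$ meets some intermediate $I_d$, i.e.\ when $t\in(f(d)-|I_{d_n}|,\,f(d)+|I_d|)$; summing, the forbidden set has measure at most $\sum_{d_{n-1}^7<d<d_n}\bigl(|I_d|+|I_{d_n}|\bigr)\le \sum_{d>d_{n-1}^7}2d^{-2}+\#\{d:d_{n-1}^7<d<d_n\}\cdot 2d_n^{-2}$, and the second term is at most $d_n\cdot 2d_n^{-2}=2/d_n$. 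For $d_{n-1}$ and $d_n$ large this total is strictly below the window length, so admissible positions form a set of positive measure inside $(q_{n-1}+\gamma,p_{n-1}-\gamma)$.

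The genuinely delicate point is that this forbidden set depends on the very element $d_n$ one is trying to choose (through both $|I_{d_n}|$ and the count of intermediate $d$), so one cannot simply place $f(d_n)$ in the complement; here is where the completeness of $\R$ enters, as in \cite{discrete}. I would fix the target point $c$ first, choosing it in the (closed, nowhere dense, positive-measure) complement of $\bigcup_{d>d_{n-1}^7}I_d$ inside the window, so that $c$ is separated from every intermediate interval, and then select $d_n$ among the left endpoints accumulating at $c$ from below with the correct rate, the uniform measure bound above guaranteeing that arbitrarily large such $d_n$ have $\overline{I_{d_n}}$ disjoint from the intermediate intervals. Finally, (iii) is what reconciles the two ranges used later: from $d_n>d_{n-1}^{49}$ one gets $d_n^{1/7}>d_{n-1}^7$, so $[d_n^{1/7},d_n]\subseteq(d_{n-1}^7,d_n]$ and the interference excluded by (ii) is exactly what $\varphi(d_n,c)$ demands. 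The crux throughout is the simultaneous fit of the isolation requirement (ii) with the shrinking nesting windows of (i), which the summability estimate makes possible.
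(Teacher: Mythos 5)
First, a point of reference: the paper you are comparing against does not prove this statement at all; it is imported verbatim as a Fact from \cite[p. 2166]{discrete}, so the only proof on record is the one in that earlier paper. Your reconstruction gets the architecture right -- reading (i) as a nested sequence of intervals $[q_n,p_n]$ shrinking to a limit point, handling (i) and (iii) by a cheap recursion with margins, and extracting the summability $\sum_{d\in D}d^{-2}<\infty$ from the $1$-separation of $D$ -- and you correctly locate the real difficulty in (ii), namely that the interference condition quantifies over the elements between $d_{n-1}^7$ and the very element $d_n$ being chosen. But your resolution of that difficulty does not work.

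The gap is in the final step. Choosing $c$ in the complement of $\bigcup_{d>d_{n-1}^7}I_d$ does \emph{not} make $c$ ``separated from every intermediate interval'': since $f(D)$ is dense in $(1,2)$, so is the set of right endpoints $f(d)(1+d^{-2})$, hence intervals $\overline{I_d}$ accumulate at \emph{every} point, and $c$ has distance $0$ to infinitely many of them; being outside the union only says each $\overline{I_d}$ lies entirely below or entirely above $c$. The intervals above $c$ are then indeed harmless (if $f(d_n)<c$, then $c\notin\overline{I_{d_n}}$ forces $f(d_n)(1+d_n^{-2})<c<f(d)$), but the intervals entirely below $c$ are not: (ii) demands that $f(d_n)$ exceed the right endpoint of every \emph{smaller} element whose interval sits below $c$, and those right endpoints accumulate at $c$ from below. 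This is a condition on how the ordering of $D$ interleaves with the ordering of the values $f(d)$, and no measure bound on a ``forbidden set'' of positions can deliver it, because the positions actually available to you -- the countable set of values $f(d)$ with $d$ large -- have measure zero and can sit entirely inside the forbidden set. Concretely, the following configuration is consistent with everything you have assumed: a chain $e_1<e_2<\cdots$ in $D$ with $f(e_{k+1})\in\bigl(f(e_k),\,f(e_k)(1+e_k^{-2})\bigr)$ and $f(e_k)(1+e_k^{-2})\nearrow c$, while every other element of $D$ whose value lies near $c$ is chosen larger than a suitable chain element. Then $c$ avoids all intervals $I_d$, yet \emph{every} candidate $d^*$ with $f(d^*)$ close to $c$ from below is blocked by some smaller $e_k$ whose right endpoint lands in $[f(d^*),c)$, so for your fixed $c$ no admissible $d_n$ exists. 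A correct argument must therefore either exploit the freedom in the choice of $c$ (proving that not every reference point can be blocked simultaneously -- a genuinely combinatorial statement about the interleaving, not a length estimate), or couple the choice of $d_n$ to the target region by a mechanism that sees the element-versus-value order (for instance via minimality of the element subject to its value landing in a prescribed interval). Your proposal supplies neither, and this is precisely the content of the key construction in \cite{discrete} that the present paper chose to cite rather than reprove.
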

It is in the construction of this sequence that the density of $f(D)$ is used. In particular, property (i) in Fact \ref{factdisc} depends crucially on this assumption. It is worth pointing out that so far we haven't said anything about the definability of the range of the sequence. For the following, fix a sequence $(d_n)_{n=1}^{\infty}$ of elements in $D$ given by Fact \ref{factdisc}.

\begin{lem}\label{mainlemma} For every $n \in \N$, there is $c \in K$ such that
\begin{itemize}
\item[(1)]$\nu_c(A_c \cap [d_2,d_n]) \subseteq \bigcup_{m \in [2,n]} (m,m+\frac{1}{m})$ and
\item[(2)] $|\nu_c(A_c \cap [d_2,d_n]) \cap (m,m+\frac{1}{m})| = 1$ for $m\in [2,n]$.
\end{itemize}\end{lem}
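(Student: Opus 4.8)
The goal is, for each fixed $n$, to produce a single parameter $c\in K$ so that the values $\nu_c$ takes on the ``active'' set $A_c\cap[d_2,d_n]$ land one-per-interval in the windows $(m,m+\tfrac1m)$ for $m\in[2,n]$. The natural strategy is to let $c$ approach the common limit point of the increasing sequence $\bigl(f(d_m)(1+\tfrac{d_m^{-2}}{\,\cdot\,})\bigr)$ values from the right, exploiting the nested interval inequalities in Fact \ref{factdisc}(i). First I would unwind the definition: a direct computation shows that $\nu_c(d_m)\in(m,m+\tfrac1m)$ is \emph{equivalent} to the pair of strict inequalities
$$
f(d_m)\Bigl(1+\frac{d_m^{-2}}{m+\frac1m}\Bigr) < c < f(d_m)\Bigl(1+\frac{d_m^{-2}}{m}\Bigr),
$$
since $\nu_c(d_m)=\tfrac{d_m^{-2}f(d_m)}{c-f(d_m)}$ is a decreasing function of $c$ on $c>f(d_m)$ and $\nu_c(d_m)=t$ exactly when $c=f(d_m)(1+\tfrac{d_m^{-2}}{t})$.

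**Choosing $c$.**
Given that reformulation, the inequalities in Fact \ref{factdisc}(i) are precisely designed so that these windows in $c$ can be hit simultaneously. The lower bounds $f(d_m)(1+\tfrac{d_m^{-2}}{m+1/m})$ are strictly increasing in $m$, while the upper bounds $f(d_m)(1+\tfrac{d_m^{-2}}{m})$ are strictly decreasing in $m$ (and all $<2$); hence the intersection
$$
\Bigl(\max_{2\le m\le n} f(d_m)(1+\tfrac{d_m^{-2}}{m+1/m}),\ \min_{2\le m\le n} f(d_m)(1+\tfrac{d_m^{-2}}{m})\Bigr)
$$
is a nonempty open interval, and I would pick any $c\in K$ in it (possible since $K$ is dense in $\R$). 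For such a $c$, property (2) holds: each $d_m$ with $2\le m\le n$ lies in $A_c$ and contributes exactly the single value $\nu_c(d_m)\in(m,m+\tfrac1m)$, so each window is hit exactly once.

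**The main obstacle: controlling $A_c$.**
The delicate part is \emph{not} that the chosen $d_m$'s behave well, but the containment (1): I must rule out that some \emph{other} $d\in D\cap[d_2,d_n]$ also lies in $A_c$ yet sends $\nu_c(d)$ outside all the windows. This is exactly what the auxiliary formula $\varphi$ and clause (ii) of Fact \ref{factdisc} are for. Membership $d\in A_c$ forces $f(d)<c<f(d)(1+d^{-2})$ together with $\varphi(d,c)$; the formula $\varphi$ says that any $u\in D$ whose window $(f(u),f(u)(1+u^{-2}))$ contains the relevant value must satisfy $u<x^{1/7}$ or $u>x$, which combined with (ii) and the rapid growth (iii) pins down that the only surviving elements in $[d_2,d_n]$ are the $d_m$ themselves. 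So the heart of the proof is a case analysis on where an arbitrary $d\in A_c\cap[d_2,d_n]$ can sit relative to the $d_m$: I would show that if $d_{m-1}^7<d<d_m$ then (ii) forces $f(d)(1+d^{-2})<f(d_m)\le c$ or $f(d)>f(d_m)(1+d_m^{-2})\ge c$, contradicting $f(d)<c<f(d)(1+d^{-2})$, so no such $d$ enters $A_c$; the boundary cases $d=d_m$ give exactly the desired values. I expect this elimination step—translating the quantifier in $\varphi$ and the trichotomy of (ii) into a clean exclusion of all spurious elements—to be the main technical work, whereas the choice of $c$ and verification of (2) are essentially the monotonicity computation above.
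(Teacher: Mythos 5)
Your proposal is correct and takes essentially the same route as the paper: the identical rearrangement of $\nu_c(d_m)\in(m,m+\frac{1}{m})$ into the window inequalities $f(d_m)(1+\frac{d_m^{-2}}{m+1/m})<c<f(d_m)(1+\frac{d_m^{-2}}{m})$, a choice of $c$ in the intersection of these windows (which, by the nesting given by Fact \ref{factdisc}(i), is exactly the paper's choice of $c$ in the $n$-th window), and the same elimination of spurious elements of $A_c\cap[d_2,d_n]$ using $\varphi$ together with Fact \ref{factdisc}(ii) and (iii). The one step you leave compressed --- applying $\varphi(d,c)$ with $u=d_{m-1}$, whose window contains $c$ by the inequalities above, to upgrade $d>d_{m-1}$ to $d>d_{m-1}^7$ before invoking (ii) --- is precisely how the paper completes the case analysis, so your plan fills in as intended.
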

\begin{proof} Take $n \in \N$. Let $c\in K$ such that
$$
f(d_{n})(1+\frac{d_{n}^{-2}}{n+\frac{1}{n}})< c < f(d_{n})(1+\frac{d_{n}^{-2}}{n}).
$$
It is left to show that
\begin{itemize}
\item[(I)] for every $m \in [2,n]$, $\nu_c(d_m) \in (m,m+\frac{1}{m})$, and
\item[(II)] $A_c \cap [d_2,d_n] = \{ d_m \ : \ m \in [2,n]\}$.
\end{itemize}
We proceed as in \cite[p. 2167]{discrete}. For (I), let $m\leq n$. By Fact \ref{factdisc}(i),
\begin{equation}\label{nuineq}
f(d_m)(1+\frac{d_m^{-2}}{m+\frac{1}{m}})<c<f(d_m)(1+\frac{d_m^{-2}}{m}).
\end{equation}
After rearrangements, \eqref{nuineq} is equivalent to the statement $\nu(d_m) \in (m,m+\frac{1}{m})$.\\
For (II), let $d \in A_c \cap [d_2,d_n]$. For a contradiction, suppose there is $m \in \mathbb{N}_{>2}$ such that $d_{m-1} < d < d_{m}$.  By \eqref{nuineq}, we have
\begin{equation*}
f(d_{m-1}) < c < f(d_{m-1})(1+d_{m-1}^{-2}).
\end{equation*}
Since $\varphi(d,c)$ holds, $d_{m-1}<d^{\frac{1}{7}}$. Hence $d_{m-1}^7<d<d_{m}$.
Thus by Fact \ref{factdisc}(ii),
\begin{equation*}
f(d)(1+d^{-2}) < f(d_{m})\textrm{ or }f(d)>f(d_m)(1+d_m^{-2}).
\end{equation*}
By the definition of $c$ and (I), we get
\begin{equation*}
f(d)(1+d^{-2})< c\textrm{ or } f(d) > c .
\end{equation*}
Hence the inequality $f(d)< c < f(d)(1+d^{-2})$ fails and thus $d \notin A_c$. \newline
Let $m \in \mathbb{N}_{>1}$ and $m\leq n$. We have to show that $d_m \in A_c$. By \eqref{nuineq}, it only remains to establish $\varphi(d_m,c)$. Therefore let $d \in D$ with $d_m^{\frac{1}{7}}<d < d_m$. By Fact \ref{factdisc}(iii), $d_{m-1}^{49}<d_m$. Hence $d_{m-1}^7< d$. As in the above argument, we get that $f(d)< c < f(d)(1+d^{-2})$ does \emph{not} hold. Hence $\varphi(d_m,c)$ and $d_m\in A_c$. Thus (II) holds.
\end{proof}

\begin{proof}[Proof of Theorem A] Let $S \subseteq K_{>0} \times K^3$ be
$$
\{ (a,b_1,b_2,b_3) \in K_{>0} \times K^3 : b_2,b_3 \in A_{b_1} \wedge a+ \nu_{b_1}(b_2) \in \nu_{b_1}(A_{b_1} \cap [b_2,b_3])\}.
$$
We will now show that $S$ satisfies the assumption of Lemma \ref{asym}. Let $n \in \N$ and $\varepsilon \in K\cap[0,1/2]$. Choose $N \in \N_{>1}$ so large such that $N^{-1} < \varepsilon$. By Lemma \ref{mainlemma}, there is $c \in K$ such that
$$\nu_c(A_c \cap [d_N,d_{N+n}]) \subseteq \bigcup_{m \in [N,N+n]} (m,m+\frac{1}{m})$$
and $$|\nu_c(A_c \cap [d_N,d_{N+n}]) \cap (m,m+\frac{1}{m})| = 1, \textrm{ for } m \in [N,N+n].$$ Since $N^{-1}<\varepsilon$, we get that $$S_{(c,d_1,d_2)} \subseteq \bigcup_{m \leq n} (m-\varepsilon,m+\varepsilon)$$ and $|S_{(c,d_1,d_2)} \cap (m-\varepsilon,m+\varepsilon)| = 1$ for  $m \leq n$.
\end{proof}

 

\end{document}